\newtheorem{theorem}{Theorem}[section]
\newtheorem{lemma}[theorem]{Lemma}
\theoremstyle{definition}
\newtheorem{remark}[theorem]{Remark}
\newtheorem{question}[theorem]{Question}
\numberwithin{equation}{section}
\newskip\aline \newskip\halfaline
\def\skipaline{\vskip\aline}
\def\qedbox{$\rlap{$\sqcap$}\sqcup$}
\def\qed{\nobreak\hfill\penalty250 \hbox{}\nobreak\hfill\qedbox\skipaline}
\def\proofend{\eqno{\mbox{\qedbox}}}
\newcommand\bR{{\mathbb R}}
\newcommand{\bV}{{{\mathbb V}}}
 \DeclareMathOperator{\Hom}{Hom}
\newcommand{\be}{\boldsymbol{e}}
\newcommand{\bsf}{\boldsymbol{f}}
\newcommand{\br}{\boldsymbol{r}}
\newcommand{\bu}{{\boldsymbol{u}}}
\newcommand{\bv}{\boldsymbol{v}}
\newcommand{\bw}{\boldsymbol{w}}
\newcommand{\bx}{{\boldsymbol{x}}}
\newcommand{\bsE}{\boldsymbol{E}}
\newcommand{\bsS}{\boldsymbol{S}}
\newcommand{\bsU}{{\boldsymbol{U}}}
\newcommand{\bsV}{{\boldsymbol{V}}}
\newcommand{\bsW}{{\boldsymbol{W}}}
\newcommand{\bnu}{{\boldsymbol{\nu}}}
\newcommand{\bsi}{\boldsymbol{\sigma}}
\newcommand{\bom}{\boldsymbol{\omega}}
\newcommand{\si}{{\sigma}}
\newcommand{\eP}{\EuScript{P}}
\newcommand{\eQ}{\EuScript{Q}}
\newcommand{\eS}{\EuScript{S}}
\newcommand{\eT}{\EuScript{T}}
\newcommand{\eV}{\EuScript{V}}
\newcommand{\ra}{\rightarrow}
\newcommand{\hra}{\hookrightarrow}
\newcommand{\Lra}{{\longrightarrow}}
\newcommand{\lan}{\langle}
\newcommand{\ran}{\rangle}
\def\inpr{\mathbin{\hbox to 6pt{\vrule height0.4pt width5pt depth0pt \kern-.4pt \vrule height6pt width0.4pt depth0pt\hss}}}
\newcommand{\pa}{\partial}
\newcommand{\dual}{{\spcheck{}}}
\begin{document}

\title{Statistics of linear  families  of  smooth functions on knots}

\date{Started  June 2, 2010. Completed  on June 4, 2010.
Last modified on {\today}. }

\author{Liviu I. Nicolaescu}

\address{Department of Mathematics, University of Notre Dame, Notre Dame, IN 46556-4618.}
\email{nicolaescu.1@nd.edu}
\urladdr{\url{http://www.nd.edu/~lnicolae/}}

\begin{abstract}   Given a   knot $K$ in an Euclidean space $\bR^n$,  and   a     finite dimensional subspace $\bsV\subset C^\infty(K)$,  we    express the expected number of critical  points of  a random function in $\bsV$in terms of  an integral-geometric invariant of $K$ and $\bsV$.  When $\bsV$  consists of  the restrictions   to $K$ of homogeneous polynomials  of degree $\ell$  on $\bR^n$,  this invariant  takes the form  of total curvature of  a certain immersion of $K$.  In particular, when $K$ is the unit circle in $\bR^2$ centered at the origin,  then the expected number of critical points of the restriction to $K$ of a random  homogeneous polynomial  of  degree $\ell$  is $2\sqrt{3\ell-2}$, and the expected number of critical points on $K$ of a random  trigonometric polynomial of degree $k$ is approximately $1.549 k$.  \end{abstract}

\maketitle

\begin{center}
\textit{To the memory of my mathematical  hero, Vladimir Igorevich Arnold}
\end{center}

\tableofcontents

\section*{Introduction}
\setcounter{equation}{0}

A celebrated   result of F\'{a}ry and Milnor \cite{Fa, Mil} states that the expected  number of critical points  of the restriction  to a knot $K\hra \bR^3$ of a  random linear map  $h:\bR^3\ra \bR$  is equal to an integral-geometric  invariant of the knot, namely, its (suitably normalized) total curvature.     It is natural to ask how this  result  changes if, instead of  random linear maps, we look  at  random  homogeneous polynomials $P:\bR^3\ra \bR$ of a given degree $\ell$. It is convenient to investigate an even more general situation.

Suppose that $\bsU$ is an oriented   real Euclidean space  of dimension $n$  and $K\hra \bsU$ is a knot in $\bsU$, i.e., an smoothly embedded $S^1$.  We assume that $0\not\in K$.  For any positive integer $\ell$ we denote by $\eQ_\ell(\bsU)$ the space of symmetric $\ell$-linear forms  on $\bsU$. Each such form  defines a polynomial  function
\[
P_\Phi:\bsU\ra \bR,\;\; P_\Phi(\bu)=\Phi(\underbrace{\bu,\dotsc,\bu}_\ell).
\]
We denote by $f_\Phi$ the restriction     of $P_\Phi$ to the knot $K$.  We  will refer to such functions as   \emph{polynomial functions on $K$ of  degree  $\ell$}. For  random $\Phi$, the function   $f_\Phi$ is Morse,  and we denote by $\mu_K^\ell(\Phi)$ its number of critical points.      We can regard $\mu_K(\Phi)$ as a random variable and  ask how  is  its expectation  related to the global geometry of $K$.

To describe this relationship,  denote by $\eS^\ell$ the unit sphere in $\eQ_\ell(\bsU)$ with respect to the natural metric $\lan-,-\ran_\ell$ induced by the Euclidean metric on  $\bsU$. The expected  number of  critical points  of $f_\Phi$ is the real number $\mu_K^\ell$ defined by
\[
\mu_K^\ell=\frac{1}{{\rm area}\,(\eS^\ell)} \int_{\eS^\ell} \mu_K^\ell(\Phi)\,dS(\Phi).
\]
In  Theorem \ref{th: vero} we express  $\mu_K^\ell$ as an integral geometric invariant of $K$.     This invariant is  described in terms of the Veronese  map $\eV_\ell :\bsU\ra \eQ_\ell(\bsU)$  uniquely determined by the equality 
\[
\bigl\lan\,\Phi, \eV_\ell(\bu)\,\bigr\ran_\ell=\Phi(\underbrace{\bu,\dotsc,\bu}_\ell),\;\;\forall \bu\in\bsU,\;\;\Phi\in\eQ_\ell(\bsU).
\]
The restriction of the Veronese map to $K$ produces an immersion
\[
\eV_\ell: K\ra \eQ_\ell(\bsU)
\]
that we called the \emph{$\ell$-th Veronese immersion} of $K$.    In Theorem \ref{th: vero} we show that $\mu_K^\ell$ is the total curvature of the $\ell$-th Veronese immersion.       The case $\ell=1$ of this theorem is the celebrated  result of F\'{a}ry and Milnor \cite{Fa, Mil}.

As an application we compute $\mu_K^\ell$, when $K$ is the unit circle in $\bR^2$ centered at the origin. More precisely, we show that  in this case $\mu^\ell_K=2\sqrt{3\ell-2}$. In other words,  the  expected  number of critical points  of a polynomial function on $K$ of degree $\ell$ is $2\sqrt{3\ell-2}$. 

We obtain Theorem  \ref{th: vero} as a special case of  Theorem \ref{th: main}     that describes the   expected    number of critical points  of a random  function belonging to a fixed finite dimensional subspace $\bsV\subset C^\infty(K)$ satisfying a nondegeneracy condition (\ref{eq: nondeg}).

Theorem \ref{th: main} has another interesting consequence. More precisely, in Theorem \ref{th: trig} we show that the expected number of critical points on the unit circle of a random     trigonometric polynomial of degree $n$ is
\[
T_n=2\sqrt{\frac{3\beta_5(n+1)}{5\beta_3(n+1)}},
\]
where $\beta_k$ denotes the $k$-th Bernoulli polynomial. In particular
\[
T_n\sim 2n\sqrt{\frac{3}{5}} \;\;\mbox{as $n\ra \infty$}.
\]

\medskip

\noindent \ding{45} {\bf Notations.} We will  denote by $\bsi_n$ the ``area'' of the  round  $n$-dimensional sphere $S^n$ of radius $1$, and by $\bom_n$  the  ``volume'' of the  unit ball in $\bR^n$.  These quantities are uniquely determined by the equalities (see \cite[Ex. 9.1.11]{N1})
\[
\bsi_{n-1}=n\bom_n,\;\;\bom_n=\frac{\Gamma(1/2)^n}{\Gamma(1+n/2)},\;\;\Gamma(1/2)=\sqrt{\pi},
\]
where $\Gamma$  is Euler's Gamma function.

\section{An   abstract result}
\label{s: 1}
\setcounter{equation}{0}

Let $\bsU$ and $\bsV$ be a real, oriented Euclidean space of dimension $n$.  We denote by $(-,-)_\bsU$ the inner product in $\bsU$ and by $S(\bsU)$ the unit sphere in $\bsU$. Suppose that  $K\hra \bsU$ is a smooth knot in $\bsU$, i.e.,   the image of a smooth embedding  $S^1\hra \bsU$.  Fix  an arclength parametrization of $K$
 \[
 s\mapsto \bx(s),\;\; 0\leq s\leq L,
 \]
 where $L$ denotes the length of $K$. 
 
 Assume that we are given a  finite dimensional subspace $\bsV\subset  C^\infty(\bsU)$ of dimension  $N$ satisfying the nondegeneracy condition
 \begin{equation}
 \forall s\in [0,L],\;\;\exists  \bv\in\bsV:\;\; d_{\bx(s)}\bv (\bx'(s))\neq 0,
 \label{eq: nondeg}
 \end{equation}
 where $d_\bx \bv $ denotes the differential of the function  $\bv$ at $\bx$.   We fix an  Euclidean inner product $(-,-)_\bsV$ on $\bsV$ and we denote by $S(\bsV)$ the unit sphere in $\bsV$ centered at the origin. 
 
 As explained in \cite[\S 1.2]{N1}, the  condition (\ref{eq: nondeg}) implies that for generic $\bv\in\bsV$, the restriction of the function $\bv$ to   $K$ is a Morse function. We denote  by $\mu_K(\bv)$  its number of critical points.      Note that for any  nonzero scalar $t\in\bR$ the restriction of $t\bv$ to $K$ is Morse if and only if    the restriction of $\bv$ to $K$ is such.     Thus, it suffices to concentrate on functions $\bv\in S(\bsV)$.   The main goal    of this section  is the computation of the expected number of critical points of a random function in $S(\bsV)$, i.e., the quantity
 \[
 \mu_K^\bsV:=\frac{1}{{\rm area}\,\bigl(\,S(\bsV)\,\bigr)}\int_{S(\bsV)} \mu_K(\bv)\,|dS(\bv)|=\frac{1}|{\bsi_{N-1} \bigl(\,S(\bsV)\,\bigr)}\int_{S(\bsV)} \mu_K(\bv)\,|dS(\bv)|,
 \]
where  $|dS|$  is the ``area'' density on   $S(\bsV)$.
 
 To describe the result we need  to introduce the main  characters.  Set $\bsV\dual:=\Hom(\bsV,\bR)$ and observe that we have a smooth map $\xi: K\ra \bsV\dual$ that associates to   each point $\bx(s)\in K$ the linear map
 \begin{equation}
 \xi_s:\bsV\ra \bR,\;\ \; \bv\mapsto d_{\bx(s)}\bv(\bx'(s))\in \bR.
 \label{eq: xi}
 \end{equation}
 Using the metric induced isomorphism $\bsV\dual\ra \bsV$ we obtain a dual map 
 \begin{equation}
 \xi^\dag: K\ra \bsV. 
 \label{eq: xi-d}
 \end{equation}
 The nondegeneracy condition (\ref{eq: nondeg}) implies  that $\xi^\dag_s\neq 0$,  $\forall s$. We  can thus define a
 smooth map
 \[
 \bnu: K\ra S(\bsV),\;\;\bnu(s):=\frac{1}{|\xi^\dag_s|}\xi_s.
 \]
 \begin{theorem} Let $\bsU$, $K$ and $\bsV$ be as above then
 \[
 \mu_K^\bsV:=\frac{1}{\bsi_{N-1}}\int_{S(\bsV)} \mu_K(\bv)\, |dS(\bv)|= \frac{1}{\pi}\int_K |\nu'(s)|\, |ds|.
 \]
 \label{th: main}
 \end{theorem}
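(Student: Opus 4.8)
The plan is to read $\mu_K(\bv)$ as an intersection count and then to average that count over $S(\bsV)$ by an integral-geometric (spherical Crofton) argument carried out on an incidence variety. First I would rewrite the critical-point condition in terms of $\bnu$. Since $\frac{d}{ds}\bv(\bx(s)) = d_{\bx(s)}\bv(\bx'(s)) = \xi_s(\bv) = (\xi^\dag_s,\bv)_\bsV = |\xi^\dag_s|\,(\bnu(s),\bv)_\bsV$ and $|\xi^\dag_s|\neq 0$ by (\ref{eq: nondeg}), the point $\bx(s)$ is critical for $\bv|_K$ exactly when $\bv$ is orthogonal to $\bnu(s)$. Therefore
\[
\mu_K(\bv) = \#\{s\in[0,L): (\bnu(s),\bv)_\bsV = 0\},
\]
i.e. $\mu_K(\bv)$ is the number of points at which the spherical curve $\bnu:K\to S(\bsV)$ crosses the equatorial hypersphere $\bv^\perp\cap S(\bsV)$ cut out by $\bv$. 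Averaging over $\bv\in S(\bsV)$ is then exactly the spherical Crofton problem for the curve $\bnu(K)$.

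To make this precise I would introduce the incidence variety
\[
I := \{(s,\bv)\in K\times S(\bsV): (\bnu(s),\bv)_\bsV = 0\}
\]
together with its projections $\pi_1:I\to K$ and $\pi_2:I\to S(\bsV)$. The restriction to $S(\bsV)$ of the $\bv$-differential of $(\bnu(s),\bv)_\bsV$ is the orthogonal projection of $\bnu(s)$, which is nonzero; hence $0$ is a regular value and $I$ is a closed smooth manifold with $\dim I = N-1 = \dim S(\bsV)$. The fibre $\pi_1^{-1}(s)$ is the unit sphere $\Sigma_s := S(\bsV)\cap\bnu(s)^\perp\cong S^{N-2}$, while $\pi_2^{-1}(\bv)$ is the critical set of $\bv|_K$. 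By the genericity recalled in the setup (equivalently, by Sard's theorem applied to $\pi_2$, whose regular values are exactly the $\bv$ for which $\bv|_K$ is Morse) one has $\#\pi_2^{-1}(\bv) = \mu_K(\bv)$ for almost every $\bv$, so the area formula for the equidimensional $C^1$ map $\pi_2$ gives
\[
\int_{S(\bsV)}\mu_K(\bv)\,|dS(\bv)| = \int_I \pi_2^*|dS|.
\]

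The computational core is to evaluate the pullback density $\pi_2^*|dS|$ in the fibre parametrization of $I$ by $(s,\bv)$ with $\bv\in\Sigma_s$. At a point $(s,\bv)\in I$ the vertical space $T_\bv\Sigma_s = \bnu(s)^\perp\cap\bv^\perp$ is carried isometrically by $d\pi_2$ into $T_\bv S(\bsV)$, while the horizontal lift of $\partial_s$ produces, after using $(\bnu(s),\bv)_\bsV=0$ and $\bnu(s)\perp\bnu'(s)$, a single new direction along $\bnu(s)$ of length $|(\bnu'(s),\bv)_\bsV|$. Hence $\pi_2^*|dS| = |(\bnu'(s),\bv)_\bsV|\,|ds|\,d\mathrm{vol}_{\Sigma_s}(\bv)$, and integrating first over the fibre and then over $K$ yields
\[
\int_{S(\bsV)}\mu_K(\bv)\,|dS(\bv)| = \int_0^L\Bigl(\int_{\Sigma_s}|(\bnu'(s),\bv)_\bsV|\,d\mathrm{vol}_{\Sigma_s}(\bv)\Bigr)|ds|.
\]
For the fixed vector $\bnu'(s)\in\bnu(s)^\perp\cong\bR^{N-1}$ a direct computation using the Gamma-function identities of the Notations gives the spherical average $\int_{S^{N-2}}|(\bnu'(s),\bv)|\,d\mathrm{vol}(\bv) = \tfrac{\bsi_{N-1}}{\pi}\,|\bnu'(s)|$. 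Substituting and dividing by $\bsi_{N-1}$ produces the asserted formula $\mu_K^\bsV = \tfrac1\pi\int_K|\bnu'(s)|\,|ds|$; this last identity is precisely where the universal constant $\tfrac1\pi$ enters.

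I expect the main obstacle to be the careful execution of the two technical points underlying the area formula, rather than anything conceptual. First, one must verify the genericity hypotheses cleanly: that $I$ is indeed a closed smooth manifold, that $\#\pi_2^{-1}(\bv) = \mu_K(\bv)$ holds off a null set, and that the locus where $\bnu'(s)=0$ (where $\bnu$ fails to immerse) is harmless, contributing zero to both sides. Second, the Jacobian bookkeeping needs care because the fibre spheres $\Sigma_s$ rotate with $s$, so the metric induced on $I$ is not a product; the saving grace is that one never needs that induced metric — computing $\pi_2^*|dS|$ directly in the product reference density $|ds|\,d\mathrm{vol}_{\Sigma_s}$ makes the potential non-product correction factor irrelevant and leaves the clean density $|(\bnu'(s),\bv)_\bsV|\,|ds|\,d\mathrm{vol}_{\Sigma_s}$.
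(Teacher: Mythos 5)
Your proposal is correct and follows essentially the same route as the paper: your incidence variety $I$ is the paper's $E_\bnu$, your area-formula-plus-fibre-integration argument is the paper's combination of the area formula for $\rho$ with the coarea formula for $\lambda$, and your spherical average $\int_{S^{N-2}}|(\bnu'(s),\bv)|\,d\mathrm{vol}(\bv)=\tfrac{\bsi_{N-1}}{\pi}|\bnu'(s)|$ is exactly the paper's Lemma~\ref{lemma: cos} combined with the Gamma-function identity $2\bom_{N-2}/\bsi_{N-1}=1/\pi$. Your observation that computing $\pi_2^*|dS|$ against the reference density $|ds|\,d\mathrm{vol}_{\Sigma_s}$ lets the non-product correction factor cancel is precisely what happens in the paper, where the factor $\sqrt{1+\mu(\bx_0,\bv_0)^2}$ in $|dV_E|$ cancels against its reciprocal in $J_\rho$.
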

 
\begin{proof} Set
 \[
 E_\bnu:=\bigl\{\, (\bx,\bv)\in K\times S(\bsV);\;\; \bnu(\bx)\perp \bv \,\bigr\}.
 \]
 Note that $E_\bnu$ can be  alternatively defined as the zero set of the function
 \[
 F: K\times S(\bsV)\ra \bR,\;\;F(\bx,\bv)=(\bnu(x),\bv)_\bsV,
 \]
 and  the differential of $F$ is nonzero along the level set  $\{F=0\}$. This shows that $E_\bnu$ is a smooth submanifold of  $K\times S(\bsV)$ and
 \[
 \dim E_\bnu=\dim S(\bsV)= N-1.
 \]
 We    denote by $g_E$ the  metric on $E_\bnu$ induced  by the natural metric of  $K\times S(\bsV)$, and by $|dV_E|$ the associated     volume density. We have two natural (left and right) smooth maps
 \[
 K\stackrel{\lambda}{\longleftarrow} E_{\bnu}\stackrel{\rho}{\Lra} \bsS(V).
 \]
 The fiber of $\lambda$ over $\bx\in K$ is the Equator
 \[
 E_\bx:=\bigl\{ \bv\in S(\bsV);\;\; \bv\perp \bnu(x)\,\bigr\},
 \]
 while the fibers of $\rho$ are generically finite.  Tautologically, the restriction of $\lambda$ to any fiber of $\rho$ is injective.         More importantly, for any $\bv\in S(\bsV)$ the subset $\lambda\bigl(\rho^{-1}(\bv))\subset K$ is the set of critical points of  the restriction of $\bv$ to $K$.   Hence,  for generic $\bv\in\bsV$ we have
 \[
 \mu_K(\bv)=\#\rho^{-1}(\bv).
 \]
 We  have thus reduced the problem to computing  the average  number  of points  in the fibers of $\rho$ in terms of  integral-geometric invariants of the map $\bnu$.      We will achieve this in (\ref{eq: av4}).

   The area   formula  (see \cite[\S 3.2]{Feder} or \cite[\S 5.1]{KP}) implies  that
\begin{equation}
\int_{S(\bsV)} \# \rho^{-1}(\bw) |dS(\bv)|=\int_E J_\rho(x,\bv) |dV_E(x,\bv)|,
\label{eq: av}
\end{equation}
where   the nonnegative  function $J_\rho$ is the Jacobian of $\rho$ defind by the equality
\[
\rho^*|dS|=J_\rho \cdot |dV_E|.
\]
To compute the integral in the right-hand side of (\ref{eq: av}) we need a  more explicit description of the geometry of $E_\bnu$.

Let $(\bx_0,\bv_0)\in E_\bnu$.  Fix an orthonormal frame $e_1,\dotsc,  e_{N-2}$ of the tangent space of $E_{\bx_0}$ at $\bv_0$.     Assume $\bx_0=\bx(s_0)$.  Then  the tangent space  of $E_\bnu$ at $(\bx_0,\bv_0)$ consists of tangent vectors
\[
\dot{s}\bx'(s_0)\oplus \dot{\bv}\in T_{\bx_0}K\oplus T_{\bv_0}S(\bsV),\;\;\dot{s}\in\bR, 
\]
such that
\begin{equation}
\dot{s}(\bnu'(s_0),\bv_0)_\bsV+ (\nu(s_0),\dot{\bv})_\bV=0.
\label{eq: tan}
\end{equation}
 Define
 \[
 e_0= \bx'(s) -(\bnu'(s_0),\bv_0)_\bsV \bnu(s_0).
 \]
For simplicity we set
\[
\mu(\bx_0, \bv_0):=\bigl|\, (\bnu'(s_0),\bv_0)_\bsV\,\bigr|.
\]
The equality (\ref{eq: tan}) implies that the collection
 \[
 e_0, e_1,\dotsc,  e_{N-2}
 \]
 is an orthogonal  basis of $T_{(\bx_0,\bv_0)}E_{\bnu}$. Moreover, the length of $e_0$ is 
 \[
 |e_0|=\sqrt{ 1+\mu(\bx_0,\bv_0)^2}.
 \]
 If we denote by $dS_\bx$ the  area form on $E_\bx$ then we see that at $(\bx_0, \bv_0)$ we have
 \[
 ds\wedge dS_\bx(e_0, e_1,\dotsc, e_{N-2})= \pm 1.
 \]
 Hence at $(\bx_0,\bv_0)$ we have
 \begin{equation}
 |dV_E|=|e_0|\cdot |ds\wedge dS_\bx|=\sqrt{ 1+\mu(\bx_0,\bv_0)^2}\bigl|ds\wedge dS_\bx|.
\label{eq: vol}
\end{equation}
The differential of  $\rho$ at $(\bx_0, \bv_0)$ is the linear map
 \[
 \rho_*: T_{(\bx_0,\bv_0)}E_\bnu\ra T_{\bv_0}S(\bsV)
 \]
 given by
 \[
 e_0\mapsto  (\bnu'(s_0),\bv_0)_\bsV\,\cdot\,  \bnu(s_0),\;\; e_i\mapsto e_i,\;\;1\leq i\leq N-2.
 \]
 We conclude that
 \begin{equation}
 J_\rho(\bx_0,\bv_0)=\frac{\mu(\bx_0,\bv_0)}{\sqrt{ 1+\mu(\bx_0,\bv_0)^2}}.
 \label{eq: jac}
 \end{equation}
 Using (\ref{eq: vol}), (\ref{eq: jac}) and the co-area formula  for the map $\lambda$ \cite[Prop. 9.1.8]{N0} we deduce 
 \begin{equation}
 \int_E J_\rho(x,\bv) |dV_E(x,\bv)|=\int_K \underbrace{\left(\int_{E_\bx} \mu(\bx, \bv) |dS_\bx(\bv)|\,\right)}_{=:J(\bx)} |ds(\bx)|.
 \label{eq: av1}
 \end{equation}
 
To proceed further we need the following elementary result.

 \begin{lemma} Suppose $\bsW$ is an $(m+1)$-dimensional oriented real Euclidean space with inner product $(-,-)$, and $\bv_0\in\bsW$. Denote by $S(\bsW)$ the unit  sphere in   $\bsE$, and by $|dS|$ the ``area'' density on $S(\bsW)$.  Then
 \[
I(\br_0) =\int_{S(\bsW)} |(\bv_0,\bw)| |dS(\bw)|=\bom_m|\bv_0|.
 \]
 \label{lemma: cos}
 \end{lemma}
 
 \begin{proof} Fix an orthonormal basis $(e_0,e_1,\dotsc, e_m)$ of $\bsW$ and denote by $(w_0,\dotsc, w_m)$ the resulting coordinates.   Observe that for any  orthogonal transformation $T: \bsW\ra \bsW$ we have $I(T\bv_0)=I(\bv_0)$ so that $I(\br_0)$ depends only on the length of $\bv_0$.  Thus, after an orthogonal transformation,  we can assume  $\bv_0= ce_0$. We can then write 
 \[
 I(ce_0)=|c|\int_{S(\bsW)}  |(e_0,\bw)| |dS(\bw)|= |c|\int_{S(\bsW)}  |w_0| |dS(\bw)|.
 \]
 Denote by $S_+(\bsW)$ the hemisphere $w_0>0$. We have
 \[
 I(ce_0)= 2|c|\int_{S_+(\bsW)}  w_0 |dS(\bw)|.
 \]
 The upper  hemisphere $S_+(\bsW)$ is the graph of the map
 \[
 w_0= \sqrt{1-r^2},\;\;r :=  \sqrt{w_1^2+\cdots +w_m^2}<1.
 \]
 Observe that 
 \[
 |\nabla w_0|=\frac{r^2}{1-r^2}\;\; \mbox{and}\;\; |dS(\bw)|=\frac{1}{\sqrt{1-r^2}} |dV_m|=\frac{1}{w_0}|dV_m|,
 \]
 where $|dV_m|$ denotes the Euclidean volume density on the $m$-dimensional Euclidean space with coordinates $(w_1,\dotsc, w_m)$. We deduce
 \[
 I(ce_0)= 2|c|\int_{r<1} |dV_m|=2|c|\bom_m.
 \]
 \end{proof}
 
  Using Lemma \ref{lemma: cos}  in the special case when $m+1=N-1$, $\bsW$ is the  hyperplane containing the Equator $E_{\bx_0}$, $\bx_0=\bx(s_0)\in K$,  and $\bv_0=\bnu'(s_0)$ we deduce
  \[
  J(\bx_0)=I(\bnu'(s_0))=2|\bnu'(s_0)|\bom_{N-2}.
  \]
  From (\ref{eq: av}) we now deduce
  \begin{equation}
  \frac{1}{\bsi_{N-1}}\int_{S(\bsV)} \# \rho^{-1}(\bv) |dS(\bv)|=\frac{2\bom_{N-2}}{\bsi_{N-1}} \int_K |\bnu'(s)|  |ds|.
  \label{eq: av2}
  \end{equation}
  Using the   equalities
  \[
  \bsi_{N-1}= N\bom_N,\;\;\bom_m=\frac{\Gamma(1/2)^m}{\Gamma(1+m/2)}.
  \]
  we deduce 
  \[
  \frac{2\bom_{N-2}}{\bsi_{N-1}}=\frac{2}{N\Gamma(1/2)^2} \frac{\Gamma(1+N/2)}{\Gamma(N/2)}=\frac{1}{\pi},
  \]
  where at the last step we have use the  classical identities  
  \[
  \Gamma(1+N/2)=\frac{N}{2}\Gamma(N/2),\;\;\Gamma(1/2)=\sqrt{\pi}.
  \]
  We have thus proved 
  \begin{equation}
   \frac{1}{\bsi_{N-1}}\int_{S(\bsV)} \# \rho^{-1}(\bv) |dS(\bv)|=\frac{1}{\pi}\int_K |\bnu'(s)|  |ds|.
   \label{eq: av4}
   \end{equation}
   \end{proof}
   
   \section{Polynomial  Morse functions on knots}
   \setcounter{equation}{0}

  Let $\bsU$ and $K\hra\bsU$ as above.   We  want to apply  the  results in the previous section to a special choice of $\bsV$.  Fix a positive integer $\ell$.   Denote by $\eQ_\ell(\bsU)$ the space of  symmetric $\ell$-linear  forms on $\bsU$, or equivalently, the space of homogeneous polynomials on $\bsU$ of degree $\ell$.   This will be our choice of subspace $\bsV\subset C^\infty(\bsU)$. The nondegeneracy assumption (\ref{eq: nondeg}) translates into the condition $0\not\in K$. Moreover,
  \[
  \dim\eQ_\ell(\bsU)=\binom{n+\ell-1}{\ell}.
  \]
  The   Euclidean metric on $\bsU$ induces an inner product $\lan-,-\ran_\ell$ on $\eQ_\ell(\bsU)$. Denote by $\eS^\ell$ the unit sphere in $\eQ_\ell(\bsU)$. For any $\Phi\in \eS^\ell$ we obtain a function
 \[
 f_\Phi:K\ra \bR,\;\; f_\Phi(\bx)=\Phi(\underbrace{\bx,\dotsc, \bx}_k).
 \]
 The function $f_\Phi$ is a Morse function for almost all $\Phi\in \eS^\ell$. We denote by $\mu_K(\Phi)$  the number of critical points of   $f_P$.   The main goal of this section is to describe the average
 \[
 \mu_K^\ell= \frac{1}{\bsi_{N_\ell-1}} \int_{\eS^\ell} \mu^\ell_K(\Phi) \,|dS(\Phi)|
 \]
 in terms of  integral-geometric  invariants of $K$.  We will achieve this   by reducing the problem to the situation investigated in the previous section.

 We begin with a linear algebra digression.    The $\ell$-th Veronese map is the linear map
 \[
 \eV=\eV_\ell: \underbrace{\bsU\otimes\cdots\otimes\bsU}_\ell\ra \eQ_\ell(\bsU),
 \]
 uniquely determined by the requirement
 \begin{equation}
\bigl\lan\, \Phi,\eV(\bu_1,\dotsc, \bu_\ell)\,\bigr\ran_\ell=\Phi(\bu_1,\dotsc,\bu_\ell),,\;\;\forall \bu_1,\dotsc,\bu_\ell \in\bsU,\;\;\Phi\in \eQ_\ell(\bsU).
 \label{eq: ver0}
 \end{equation}
 Note that for any smooth paths
 \[
 t\mapsto \bu_i(t)\in    \bsU,\;\;i=1,\dotsc, \ell,
 \]
 we have
 \[
 \frac{d}{dt}\eV(\bu_1,\dotsc,\bu_\ell)=\eV(\dot{\bu}_1,\bu_2,\dotsc,\bu_\ell)+\cdots +\eV(\bu_1,\dotsc,\bu_{\ell-1}, \dot{\bu}_\ell),
 \]
 where a  dot indicates $t$-differentiation.    For any  $\bu\in  \bsU$ we set
 \[
 M_\bu:=\eV(\underbrace{\bu,\dotsc,\bu}_\ell).
 \]
 With these notations we  deduce that
 \[
 f_\Phi(\bx)=\lan \Phi, M_\bx\ran_\ell,\;\;\forall \bx\in K,\;\;\Phi\in\eS^\ell.
 \]
 We deduce that $\bx_0=\bx(s_0)$ is a critical point of $f_\Phi$ if
 \[
 \lan \Phi, M'_{\bx(s_0)}\ran_\ell=0,
 \] 
  where a prime $'$ indicates $s$-differentiation.  Observe  that
 \[
 M'_{\bx(s)}=\ell \eV(\bx'(s),\underbrace{\bx(s),\dotsc,\bx(s)}_{\ell-1}),
 \]
 Since $\bx(s),\bx'(s)\neq 0$, $\forall s$ we deduce
 \begin{equation}
 M'_{\bx(s)}\neq 0,\;\;\forall s.
 \label{eq: immer}
 \end{equation}
Define 
 \[
\bnu_\ell:K\ra \eS^\ell,\;\; \bnu(s) =\frac{1}{|M'_{\bx(s)}|}M'_{\bx(s)}.
 \]
Using (\ref{eq: av4}) we deduce
 \begin{equation}
 \mu^\ell_K=\frac{1}{\pi} \int_K |\bnu_\ell'(s)| |ds|.
 \label{eq: av5}
 \end{equation}
 To formulate this in a more geometric fashion observe that (\ref{eq: immer}) implies that the map
 \begin{equation}
 K\ni\bx\mapsto \eV(\underbrace{\bx,\dotsc,\bx})\in\eQ_\ell
 \label{eq: vero}
 \end{equation}
 is an immersion.   We will refer to it  as the \emph{$\ell$-th Veronese immersion}  of  $K$.   For every $s$ the unit vector $\bnu_\ell(s)$ is the unit tangent vector field along this  Veronese embedding.   The integral    in the right-hand side of (\ref{eq: av5})  is then precisely the total curvature of the $\ell$-th Veronese immersion of $K$. We denote it by $\tau_\ell(K)$.  We have thus proved the following result.
 
 \begin{theorem} Let  $K\subset \bsU$ be a knot in the Euclidean space $\bsU$ such that $0\not \in K$. Then for any positive integer $\ell$ we have
 \[
 \mu_K^\ell=\frac{1}{\pi}\tau_\ell(K), 
 \]
 where $\tau_\ell(K)$ denotes  the  total curvature of the $\ell$-th Veronese immersion  of $K$ defined by (\ref{eq: vero}).\qed
 \label{th: vero}
 \end{theorem}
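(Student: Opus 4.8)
The plan is to deduce Theorem \ref{th: vero} as the special case $\bsV=\eQ_\ell(\bsU)$ of Theorem \ref{th: main}, so that the whole argument amounts to matching the abstract data of the latter with the concrete Veronese geometry. First I would check that the standing hypothesis $0\not\in K$ is exactly what is needed to verify the nondegeneracy condition (\ref{eq: nondeg}) for this choice of $\bsV$. Writing $f_\Phi(\bx)=\lan\Phi,M_\bx\ran_\ell$ with $M_\bx=\eV(\bx,\dotsc,\bx)$, differentiation along the arclength parametrization gives $M'_{\bx(s)}=\ell\,\eV(\bx'(s),\bx(s),\dotsc,\bx(s))$; since $|\bx'(s)|=1$ and $\bx(s)\neq 0$ for all $s$, this vector is nonzero, which is (\ref{eq: immer}). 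As $M'_{\bx(s)}\neq 0$, there is always some $\Phi$ pairing nontrivially with it, so (\ref{eq: nondeg}) holds.

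The key step is to identify the abstract unit vector field $\bnu$ produced in Theorem \ref{th: main} with the unit tangent $\bnu_\ell$ of the Veronese immersion. From $f_\Phi(\bx)=\lan\Phi,M_\bx\ran_\ell$ one reads off $\xi_s(\Phi)=d_{\bx(s)}f_\Phi(\bx'(s))=\lan\Phi,M'_{\bx(s)}\ran_\ell$, so under the metric isomorphism $\eQ_\ell(\bsU)\dual\cong\eQ_\ell(\bsU)$ the dual vector is simply $\xi^\dag_s=M'_{\bx(s)}$. Normalizing then yields $\bnu(s)=\bnu_\ell(s)$, and in particular $|\bnu'(s)|=|\bnu_\ell'(s)|$; this is precisely the content of (\ref{eq: av5}).

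Finally I would recognize the integral as total curvature. Because (\ref{eq: immer}) shows that the map (\ref{eq: vero}) is an immersion whose unit tangent field is $\bnu_\ell$, the quantity $\frac{1}{\pi}\int_K|\bnu_\ell'(s)|\,|ds|$ delivered by Theorem \ref{th: main} is, by definition, $\frac{1}{\pi}\tau_\ell(K)$, giving $\mu_K^\ell=\frac{1}{\pi}\tau_\ell(K)$. The only genuinely substantive point is the duality computation $\xi^\dag_s=M'_{\bx(s)}$, where the defining property (\ref{eq: ver0}) of the Veronese map does all the work; I expect this identification to be the main obstacle, since everything else is bookkeeping inherited from Theorem \ref{th: main}.
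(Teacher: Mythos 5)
Your proposal is correct and follows essentially the same route as the paper: the paper also obtains Theorem \ref{th: vero} by specializing Theorem \ref{th: main} to $\bsV=\eQ_\ell(\bsU)$, writing $f_\Phi(\bx)=\lan\Phi,M_\bx\ran_\ell$, observing $M'_{\bx(s)}=\ell\,\eV(\bx'(s),\bx(s),\dotsc,\bx(s))\neq 0$, and identifying the normalized $M'_{\bx(s)}$ with the unit tangent of the Veronese immersion so that the integral in (\ref{eq: av4}) becomes $\tfrac{1}{\pi}\tau_\ell(K)$. The duality identification $\xi^\dag_s=M'_{\bx(s)}$ that you single out is indeed the one substantive point, and your treatment of it via (\ref{eq: ver0}) matches the paper's.
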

 
 \begin{remark}  (a)The  case $\ell=1$ in the above theorem  is precisely  the celebrated result of  F\'{a}ry and Milnor, \cite{Fa, Mil}.

 (b) The arguments in the proof   of  Theorem \ref{th: vero} show something more. More precisely, if $X\subset \bsU$ is a compact submanifold of $\bsU$, then  the  expected number of critical points  of $P_\Phi|_X$ is  equal to the expected number of critical points  of the function $\lambda \circ \eV_\ell|_X$, where $\lambda:\eQ_\ell\ra \bR$ is a random  linear function of norm $1$.  The arguments in Chern and Lashof, \cite{CL}, show that this number is the total curvature of the $\ell$-th Veronese immersion $\eV_\ell:X\ra \eQ_\ell(\bsU)$. \qed
 \end{remark}
 
 \section{An application}
 \setcounter{equation}{0}
 
 We want to compute the  higher total curvatures $\tau_\ell(K)$ when $K$ is the unit circle in $\bsU=\bR^2$ centered at the origin.   Denote by $K_\ell$ the $\ell$-th  Veronese  immersion of $K$.  We begin by providing a more explicit description of  the  Veronese map
 \[
 \eV_\ell: \underbrace{\bsU\otimes \cdots\otimes\bsU}_\ell\ra \eQ_\ell(\bsU)
 \]
 For any  
 $\ell$-linear form
 \[
 \Psi: \bsU\times\cdots\times \bsU\ra \bR 
 \]
  we define its symmetrization $\boldsymbol{\lan}\,\Psi\,\boldsymbol{\ran}\in\eQ_\ell(\bsU)$ by
  \[
  \boldsymbol{\lan}\,\Psi\,\boldsymbol{\ran}(\bu_1,\dotsc,\bu_\ell):=\frac{1}{\ell!}\sum_{\si\in\mathfrak{S}_\ell} \Psi(\bu_{\si(1)},\dotsc,\bu_{\si(\ell)}),
  \]
  where $\mathfrak{S}_\ell$ denotes the group of permutations of $\{1,\dotsc,\ell\}$. If $\bu_1,\dotsc, \bu_\ell\in \bsU$, we denote by $\bu_i^\dag$ the linear functionals 
\[
\bu_i^\dag:\bsU\ra \bR,\;\;\bu_i^\dag(\bu)=(\bu_i,\bu)_\bsU.
\]
Then
\[
\eV_\ell(\bu_1,\dotsc,\bu_\ell)= \boldsymbol{\bigl\lan}\,\bu_1^\dag\otimes\cdots\otimes\bu_\ell^\dag\,\boldsymbol{\bigr\ran}.
\]
Denote by $(e_1,e_2)$ the canonical orthonormal  basis of $\bR^2$  and by $(e^1,e^2)$ its dual basis.    An \emph{orthonormal basis} of $\eQ_\ell$ is given by the monomials
\[
\Phi_j= \binom{\ell}{j}^{1/2}\boldsymbol{\Bigl\lan}\, (e^1)^{\otimes j}\otimes (e^2)^{\otimes (\ell-j)}\,\boldsymbol{\Bigr\ran},\;\;0\leq j\leq \ell.
\]
For any $\bx\in\bsU$ we have
\[
\eV_\ell(\bx)= \sum_{j=0}^\ell V_j(\bx)\Phi_j,
\]
where
\[
V_j(x)= \lan \eV_\ell(\bx),\Phi_j\ran_\ell =\Phi_j(\bx,\dotsc,\bx)= \binom{\ell}{j}^{1/2}  x_1^jx_2^{\ell-j}.
\]
The map $\eV_\ell$ has an important invariance property.  Observe that  we have a linear    right action of  $SO(\bsU)$ on $\eQ_\ell$ 
\[
\eQ_\ell(\bsU)\times SO(\bsU)\ni (\Phi,R)\mapsto \eT_R\Phi\in \eQ_\ell(\bsU),
\]
where $\eT_R:\eQ_\ell(\bsU)\ra \eQ_\ell(\bsU)$ is the \emph{isometry} defined  by
\[
(\eT_R\Phi) (\bu_1,\dotsc, \bu_\ell)=\Phi(R\bu_1,\dotsc, R\bu_\ell),
\]
for all $\Phi\in\eQ_\ell$ and  $\bu_1,\dotsc,\bu_\ell\in \bsU$.   The   equality (\ref{eq: ver0}) implies that  for any $\bu_1,\dotsc,\bu_\ell\in\bsU$ we have  
\begin{equation}
\eV_\ell(R\bu_1,\dotsc,R\bu_\ell)=\eT_R^\dag \bigl(\eV_\ell(\bu_1,\dotsc,\bu_\ell)\,\bigr),
\label{eq:  action}
\end{equation}
where $\eT_R^\dag:\eQ_\ell(\bsU)\ra \eQ_\ell(\bsU)$ denotes the  adjoint of $\eT_R$. 

Consider the   standard parametrization $\theta\mapsto\bx(\theta)=(\cos\theta,\sin\theta)$ of the unit circle in $\bsU$.  The $\ell$-th  Veronese immersion of the unit circle is given by the map
\[
V: [0,2\pi]\ni\theta \mapsto \bigl(\, V_0(\theta),\dotsc, V_\ell(\theta)\,\bigr)\in \bR^{\ell+1}
\]
\[
V_j(\theta)=\binom{\ell}{j}^{1/2}(\cos\theta)^j(\sin\theta)^{(\ell-j)},\;\;0\leq j\leq \ell.
\]

We have
\[
V'(\theta)=\ell \eV_\ell\bigl(\,\bx'(\theta),\bx(\theta), \dotsc, \bx(\theta)\,\bigr),,\;\;\bx''(\theta)=-\bx(\theta).
\]
and
\[
V''(\theta)=\ell \eV_\ell\bigl(\,\bx''(\theta),\bx(\theta), \dotsc, \bx(\theta)\,\bigr)+ \ell(\ell-1) \eV_\ell\bigl(\,\bx'(\theta),\bx'(\theta), \bx(\theta),\dotsc, \bx(\theta)\,\bigr).
\]
If we denote by $R_\theta$ the counterclockwise rotation of $\bR^2$ of angle $\theta$, then 
\[
\bx(\theta)= R_\theta\bx(0),\;\;\bx'(\theta)= R_\theta \bx'(0). 
\]
Using (\ref{eq: action}) we deduce that
\[
V'(\theta)=\eT_{R_\theta}^\dag V'(0),\;\; V''(\theta)= \eT_{R_\theta}^\dag  V''(\theta).
\]
In particular, since the map  $\eT_R^\dag:\eQ_\ell(\bsU)\ra \eQ_\ell(\bsU)$ is an isometry we deduce
\[
|V'(\theta)|=|V'(0)|,\;\;|V''(\theta)|= |V''(0)|,\;\;\forall \theta.
\]
Observe that for $0<j <\ell$ we have
\[
V'_j(\theta)=\binom{\ell}{j}^{1/2}\left( -j(\cos\theta)^{(j-1)}(\sin\theta)^{(\ell-j+1)}+(\ell-j)(\cos\theta)^{(j+1)}(\sin\theta)^{(\ell-j-1)}\,\right)
\]
\[
=\binom{\ell}{j}^{1/2}(\cos\theta)^{(j-1)}(\sin\theta)^{(\ell-j-1)}\bigl(\,-j\sin^2\theta+(\ell-j)\cos^2\theta\,\bigr)
\]
\[
=\binom{\ell}{j}^{1/2}(\cos\theta)^{(j-1)}(\sin\theta)^{(\ell-j-1)}\bigl(-j +\ell\cos^2\theta).
\]
Next we observe that
\[
V_\ell'(\theta)= -\ell (\cos\theta)^{\ell-1}\sin\theta,\;\; V_0'(\theta)=\ell(\sin\theta)^{\ell-1}\cos\theta.
\]
This shows that for $\theta=0$ only the component $V'_{\ell-1}(0)$ is non trivial and it is equal to $\ell^{1/2}$. This proves that 
\[
|V'(\theta)|=\ell^{1/2}.
\]
Thus the arclength parameter along the  curve $\theta\mapsto V(\theta)$ is $s=\ell^{1/2}\theta$, the length of $K_\ell$ is $2\pi\ell^{1/2}$  and
\[
\frac{d}{ds} = \ell^{-1/2}\frac{d}{d\theta}.
\]
Next we compute $|V''(0)|$.   If $\ell-j-1\geq 2$, i.e. $j\leq \ell-3$ we have $V_j''(0)=0$.
\[
V_\ell''(0)=-\ell,  \;\; V_{\ell-1}''(0)=0,\;\;V''_{\ell-2}=2\binom{\ell}{2}^{1/2}.
\]
Hence
\[
\left|\frac{d^2}{d\theta^2}V(\theta)\right|^2= \ell^2 +4\binom{\ell}{2} = 3\ell^2-2\ell \Rightarrow \left|\frac{d^2}{ds^2}V(s)\right|^2=\frac{3\ell-2}{\ell}.
\]
Thus
\[
\tau_\ell(K)= \frac{1}{\pi} \int_0^{2\pi\ell^{1/2} } \Biggl|\frac{d^2}{ds^2} V(s)\,\Biggr| ds= 2\sqrt{3\ell-2}.
\]
We have thus proved the following result.

\begin{theorem} If $K$ is the unit circle in  $\bR^2$ centered at the origin, then
\[
\mu_K^\ell=2\sqrt{3\ell-2},\;\;\forall \ell\geq 1.\proofend
\]
\label{th:  typical}
\end{theorem}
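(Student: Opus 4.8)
The plan is to reduce the statement to a total-curvature computation via Theorem \ref{th: vero} and then to exploit the rotational symmetry of the circle to make that computation essentially zero-dimensional. By Theorem \ref{th: vero} we have $\mu_K^\ell=\frac{1}{\pi}\tau_\ell(K)$, so it suffices to compute the total curvature of the $\ell$-th Veronese immersion of the unit circle. First I would write this immersion out explicitly in the orthonormal basis $\{\Phi_j\}_{0\le j\le\ell}$ of $\eQ_\ell$: parametrizing $K$ by $\theta\mapsto(\cos\theta,\sin\theta)$ one obtains a curve $V(\theta)=\bigl(V_0(\theta),\dotsc,V_\ell(\theta)\bigr)$ with $V_j(\theta)=\binom{\ell}{j}^{1/2}(\cos\theta)^j(\sin\theta)^{\ell-j}$. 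Since total curvature is $\int\kappa\,d\sigma$, where $\kappa$ is the curvature and $\sigma$ the arclength of $V$, everything reduces to understanding $|V'(\theta)|$ and $|V''(\theta)|$.

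The crucial step --- and the one that turns a messy trigonometric sum into a one-line computation --- is to invoke the $SO(2)$-equivariance (\ref{eq: action}) of the Veronese map. Because $\bx(\theta)=R_\theta\bx(0)$ and $\bx'(\theta)=R_\theta\bx'(0)$, and because rotations act on $\eQ_\ell$ through the \emph{isometries} $\eT_{R_\theta}^\dag$, I would conclude that $V'(\theta)=\eT_{R_\theta}^\dag V'(0)$ and $V''(\theta)=\eT_{R_\theta}^\dag V''(0)$. Consequently $|V'(\theta)|$ and $|V''(\theta)|$ are independent of $\theta$, so the Veronese immersion of the circle has constant speed and constant curvature. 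This homogeneity is the heart of the argument: it lets me evaluate all relevant quantities at the single point $\theta=0$.

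It then remains to compute $|V'(0)|$ and $|V''(0)|$ and to assemble the integral. At $\theta=0$ the factors $(\sin\theta)^{\ell-j}$ annihilate every component except those forced to survive by differentiation; a short bookkeeping shows that only $V_{\ell-1}'(0)=\ell^{1/2}$ survives in the first derivative, giving $|V'(0)|=\ell^{1/2}$, and only $V_\ell''(0)=-\ell$ and $V_{\ell-2}''(0)=2\binom{\ell}{2}^{1/2}$ survive in the second, giving $|V''(0)|^2=\ell^2+4\binom{\ell}{2}=3\ell^2-2\ell$. Constant speed $\ell^{1/2}$ means $s=\ell^{1/2}\theta$ is the arclength of $V$, so the length of the Veronese immersion is $2\pi\ell^{1/2}$ and $\tfrac{d}{ds}=\ell^{-1/2}\tfrac{d}{d\theta}$; hence the (constant) curvature is $\kappa=|\tfrac{d^2}{ds^2}V|=\sqrt{(3\ell-2)/\ell}$. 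Integrating the constant curvature over the total length yields $\tau_\ell(K)=\sqrt{(3\ell-2)/\ell}\cdot 2\pi\ell^{1/2}=2\pi\sqrt{3\ell-2}$, and therefore $\mu_K^\ell=\tfrac{1}{\pi}\tau_\ell(K)=2\sqrt{3\ell-2}$. The only genuine obstacle is the equivariance reduction in the second paragraph; once that is in place the remaining derivative computation at $\theta=0$ is routine, with the only care needed being the correct count of which components are nonzero there.
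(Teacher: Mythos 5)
Your proposal is correct and follows essentially the same route as the paper: reduce to the total curvature of the Veronese immersion via Theorem \ref{th: vero}, use the $SO(2)$-equivariance (\ref{eq: action}) to get constant speed and constant curvature, and evaluate $|V'(0)|$ and $|V''(0)|$ at a single point. Your bookkeeping of the surviving components at $\theta=0$ and the final normalization $\tau_\ell(K)=2\pi\sqrt{3\ell-2}$, $\mu_K^\ell=\tfrac{1}{\pi}\tau_\ell(K)=2\sqrt{3\ell-2}$ match the paper's computation (and are in fact stated more consistently than the paper's final display, which absorbs the factor $1/\pi$ into what it labels $\tau_\ell(K)$).
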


\begin{remark} (a) The case $\ell=1$ of the above theorem is   obvious.  For the case $\ell=2$, i.e., the equality $\mu_K^2=4$, we can give two alternate proofs.     For the first proof we  observe that
\[
\eV_2(\cos\theta,\sin\theta) =\bigl(\, x\theta),y(\theta),z(\theta)\,\bigr) (\sin^2\theta, \sqrt{2}\sin\theta \cos\theta,\cos^2\theta).
\]
Observe that  the image of the   $2$nd Veronese immersion is the intersection of the sphere $x^2+y^2+z^2=1$ with the plane $x+z=1$ which is a circle of radius $\frac{1}{\sqrt{2}}$.  Moreover, the Veronese  immersion double covers this circle. These facts imply easily that $\mu_K^2=4$.

For the second proof  we observe that the typical pencil of plane conics
\[
C_t=\{ ax^2+2bxy+c^2y^2=t\}
\]
 has  four points of tangency with the  unit circle.

 \begin{figure}[ht]
\centering{\includegraphics[height=2.5in,width=2.5in]{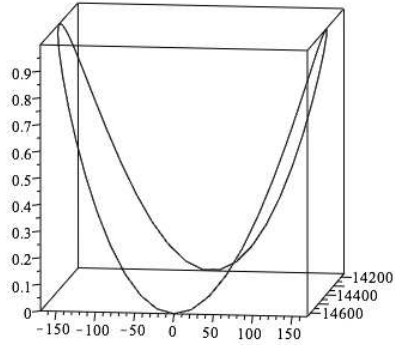}}
\caption{\sl The  second Veronese embedding of the circle of radius $1$ with center $(120,0)$.}
\label{fig: 1}
\end{figure}
 
 (b) The invariant $\mu_K^\ell$ is invariant under rotations, but not under translations. For example if $K$ is the    circle of radius $1$ in the plane centered at the point $(3,0)$. Then the second Veronese immersion of $K$ is given by
 \[
 [0,2\pi]\ni t\mapsto \bigl(\, (3+\cos t)^2, \sqrt{2}(3+\cos t)\sin t, \sin^2 t\,\bigr)\in\bR^3.
 \]
 \emph{MAPLE} aided computation shows that its pointwise curvature is
 \[
\kappa(t)= \sqrt {-{\frac {108\, \left( \cos \left( t \right)  \right) ^{3}-54\,
 \left( \cos \left( t \right)  \right) ^{2}-360\,\cos \left( t
 \right) -238}{ -2\left( 9\, \left( \cos \left( t \right)  \right) ^{2}-
19-6\,\cos \left( t \right)  \right) ^{3}}}},
\]  
while its total curvature is
\[
\mu_K^2\approx 2.065 < 2\sqrt{3\cdot 2-2}=4.
\]
Numerical experimentation suggest  that if $K(d)$ is the circle of radius $1$ centered at the point $(d,0)$ then
\[
\lim_{d\ra \infty} \mu^2_{K(d)} =2.
\]
In view of Fenchel theorem, this would indicate that  for $d$ large  the second  Veronese  embedding of $K(d)$ is very close to a planar convex curve.   The \emph{MAPLE} generated   Figure \ref{fig: 1}   seems to confirm this, since the second Veronese immersion   is contained in a box  of dimensions $1\times a\times b$, where $a>300$ and $b>400$.\qed

 \end{remark}

\section{Critical points of trigonometric polynomials}
\setcounter{equation}{0}

As another application of Theorem \ref{th: main}, we want to compute the expected number of critical points on the unit circle of a trigonometric polynomial of degree $\leq n$.

Let $K$ denote the     unit circle in $\bR^2$ centered at the origin and denote by $\bsV_n$ the vector space of trigonometric polynomials of degree $\leq n$ on $K$, i.e.,     functions $f:K\ra \bR$ of the form
\[
f(\theta)= a_0+\sum_{k=1}^n  \bigl(\,a_k\cos k\theta+b_k\sin k\theta\,\bigr),
\]
equipped with the inner product
\[
(f,g)_\bsV= \frac{1}{2\pi}\int_0^{2\pi}  f(\theta) g(\theta) d\theta.
\]
Thus
\[
\left| a_0+\sum_{k=1}^n  \bigl(\,a_k\cos k\theta+b_k\sin k\theta\,\bigr)\,\right|^2_\bsV= a_0^2 + \frac{1}{2}\sum_{k=1}^n (a_k^2+b_k^2).
\]
We  obtain  an orthonormal basis of $\bsV_n$ 
\[
\be_0=1,\;\;\be_{k}=\sqrt{2} \cos k\theta,\;\;\bsf_k= \sqrt{2} \sin k\theta ,\;\;k=1,\dotsc, n.
\]
The function
\[
\xi: K\ra \bsV_n,\;\;[0,2\pi] \ni s \mapsto \xi_s^\dag
\]
defined by  (\ref{eq: xi}) +(\ref{eq: xi-d}) is given in this case by  the map
\[
s\mapsto \xi_s^\dag= \sqrt{2}\sum_{k=1}^n k\bigl(-(\sin ks)\be_k +(\cos ks)\bsf_k\,\bigr).
\]
Observe that
\[
|\xi_s^\dag|^2= 2\sum_{k=1}^n k^2.
\]
We set
\[
\bnu(s):=\frac{1}{|\xi_s^\dag|}\xi_s^\dag=\left(\sum_{k=1}^n k^2\right)^{-1/2} \sum_{k=1}^n k\bigl(-(\sin ks)\be_k +(\cos ks)\bsf_k\,\bigr).
\]
Then
\[
\bnu'(s)= \left(\sum_{k=1}^n k^2\right)^{-1/2} \sum_{k=1}^n k^2\bigl(\, (\cos ks)\be_k -(\sin ks)\bsf_k\,\bigr),
\]
so that
\[
|\bnu'(s)|= \left(\sum_{k=1}^n k^2\right)^{-1/2}\cdot \left(\sum_{k=1}^n k^4\right)^{1/2} 
\]
Using the classical identity
\[
\sum_{k=1}^n k^j =\frac{1}{j+1}\beta_{j+1}(n+1),
\]
where $\beta_\ell$ is the $\ell$-th Bernoulli polynomial   we deduce
\begin{equation}
|\bnu'(s)|=\sqrt{\frac{3\beta_5(n+1)}{5\beta_3(n+1)}},
\label{eq: trig}
\end{equation}
where
\[
\beta_3(x)= x^3-\frac{3}{2}x^2+\frac{1}{2}x,\;\;\beta_5(x)=x^5-\frac{5}{2}x^4 +\frac{5}{3}x^3-\frac{1}{6}x.
\]
The equality (\ref{eq: trig}) and Theorem \ref{th: main} imply the following result. 

\begin{theorem} The expected number of critical points on the unit circle of a  trigonometric polynomial of degree $\leq n$ is
\[
T_n=2 \sqrt{\frac{3\beta_5(n+1)}{5\beta_3(n+1)}}=2\sqrt{\frac{\sum_{k=1}^nk^4}{\sum_{k=1}^nk^2}}.
\]
In particular  
\[
T_n \sim 2n\sqrt{\frac{3}{5}} \;\;\mbox{as $n\ra \infty$}.\proofend
\]
\label{th: trig}
\end{theorem}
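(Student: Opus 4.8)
The plan is to invoke Theorem~\ref{th: main} directly with $\bsV=\bsV_n$ and the explicit computation of $|\bnu'(s)|$ already carried out in~(\ref{eq: trig}). The core observation is that the subspace $\bsV_n\subset C^\infty(K)$ satisfies the nondegeneracy condition~(\ref{eq: nondeg}): since the constant function $\be_0$ has vanishing differential but, e.g., $\be_1=\sqrt{2}\cos\theta$ has $d_{\bx(s)}\be_1(\bx'(s))=-\sqrt{2}\sin s\neq 0$ away from $s=0,\pi$, and $\bsf_1$ covers those points, the map $\xi^\dag_s$ is nowhere zero, as confirmed by the computation $|\xi_s^\dag|^2=2\sum_{k=1}^n k^2>0$. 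Thus Theorem~\ref{th: main} applies and yields
\[
T_n=\mu_K^{\bsV_n}=\frac{1}{\pi}\int_K|\bnu'(s)|\,|ds|.
\]

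The next step is simply to substitute the value of $|\bnu'(s)|$ from~(\ref{eq: trig}) and carry out the (trivial) integration. The key structural fact is that $|\bnu'(s)|$ is \emph{constant} in $s$: the rotational symmetry of the trigonometric basis makes $|\bnu'(s)|=\bigl(\sum k^2\bigr)^{-1/2}\bigl(\sum k^4\bigr)^{1/2}$ independent of $s$, so the integral over the circle of circumference $2\pi$ is just $2\pi$ times this constant. Dividing by $\pi$ gives
\[
T_n=\frac{1}{\pi}\cdot 2\pi\cdot|\bnu'(s)|=2\sqrt{\frac{\sum_{k=1}^n k^4}{\sum_{k=1}^n k^2}}=2\sqrt{\frac{3\beta_5(n+1)}{5\beta_3(n+1)}},
\]
where the last equality uses the Bernoulli-polynomial formula $\sum_{k=1}^n k^j=\frac{1}{j+1}\beta_{j+1}(n+1)$ with $j=2,4$ and the explicit cancellation of the factor $3/5$.

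Finally, for the asymptotic statement I would extract the leading terms of the Bernoulli polynomials: $\beta_5(n+1)\sim (n+1)^5$ and $\beta_3(n+1)\sim(n+1)^3$, so the ratio inside the radical behaves like $\frac{3}{5}(n+1)^2\sim\frac{3}{5}n^2$, giving $T_n\sim 2n\sqrt{3/5}$. Equivalently one reads this off from the power-sum form via the elementary asymptotics $\sum_{k=1}^n k^4\sim n^5/5$ and $\sum_{k=1}^n k^2\sim n^3/3$, whose ratio is $\sim \frac{3}{5}n^2$. I do not anticipate a genuine obstacle here: essentially all the analytic content has been pushed into Theorem~\ref{th: main} and into the prior derivation of~(\ref{eq: trig}). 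The only point requiring a moment's care is verifying the nondegeneracy hypothesis~(\ref{eq: nondeg}) so that Theorem~\ref{th: main} is legitimately applicable; once that is in place, the result is a direct substitution followed by routine asymptotic bookkeeping.
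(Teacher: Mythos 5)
Your proposal is correct and follows exactly the paper's route: the paper derives the constant value $|\bnu'(s)|=\bigl(\sum_{k=1}^nk^4\bigr)^{1/2}\bigl(\sum_{k=1}^nk^2\bigr)^{-1/2}$ in~(\ref{eq: trig}) from the orthonormal basis $\be_0,\be_k,\bsf_k$ and then cites Theorem~\ref{th: main} to conclude, just as you do. Your explicit check of the nondegeneracy hypothesis~(\ref{eq: nondeg}) via $\be_1$ and $\bsf_1$ is a small addition the paper leaves implicit (it follows anyway from $|\xi_s^\dag|^2=2\sum_{k=1}^nk^2>0$), and the asymptotic bookkeeping matches.
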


\begin{remark}     The above theorem  indicates  that for large $k$, a trigonometric  polynomial  of degree $k$  is expected to have approximately  $ 2\sqrt{\frac{3}{5}} k\approx 1.549 k$ critical points.  In particular, it will likely have less that $2k$ critical points on the unit circle.

\begin{figure}[ht]
\centering{\includegraphics[height=2.5in,width=2.5in]{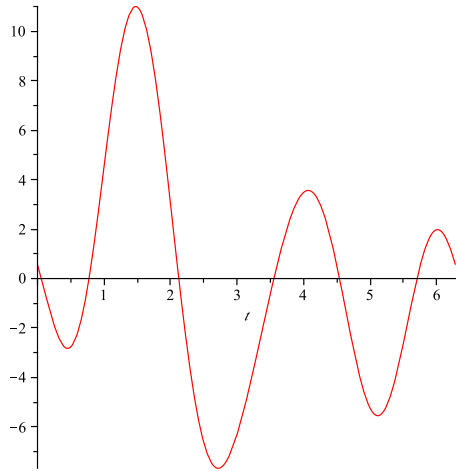}}
\caption{\sl A ``random'' trigonometric polynomial of degree $4$.}
\label{fig: trig}
\end{figure}

In the \emph{MAPLE} generated Figure  \ref{fig: trig} we depicted the graph of  a ``random''  trigonometric polynomial of degree $4$
\[
1.2\cos t  + 2.35\sin t  - 3.17\cos 2t + 2.71\sin 2t + 1.53\cos 3t  - 4.17\sin 3t 
 \]
 \[
 +\cos 4t  - 1.15\sin 4t .
\]
 We notice  that it has  $6$ critical points, which is very close to the  expected value $T_4\approx 6.87$. \qed \end{remark}

\end{document}